\newtheorem{thm}{Theorem}[section]
\newtheorem{prop}[thm]{Proposition}
\theoremstyle{definition}
\newtheorem{defn}[thm]{Definition}
\newtheorem{example}[thm]{Example}
\theoremstyle{remark}
\newtheorem{rem}[thm]{Remark}
\numberwithin{equation}{section}
\begin{document}
\title[Semi-Bloch periodic functions, semi-anti-periodic functions...]{Semi-Bloch periodic functions, semi-anti-periodic functions and applications}
\author{Belkacem Chaouchi}
\address{Lab. de l'Energie et des Syst\' emes Intelligents, Khemis Miliana
University, 44225 Khemis Miliana, Algeria}
\email{chaouchicukm@gmail.com}
\author{Marko Kosti\' c}
\address{Faculty of Technical Sciences, University of Novi Sad, Trg D.
Obradovi\' ca 6, 21125 Novi Sad, Serbia}
\email{marco.s@verat.net}
\author{Stevan Pilipovi\'c}
\address{Department for Mathematics and Informatics, University of Novi Sad,
Trg D. Obradovi\' ca 4, 21000 Novi Sad, Serbia}
\email{pilipovic@dmi.uns.ac.rs}
\author{Daniel Velinov}
\address{Faculty of Civil Engineering, Ss. Cyril and Methodius University,
Skopje, Partizanski Odredi 24, P.O. box 560, 1000 Skopje}
\email{velinovd@gf.ukim.edu.mk}

\begin{abstract}
In this paper, we introduce the notions of semi-Bloch periodic functions and
semi-anti-periodic functions. Stepanov semi-Bloch periodic functions and
Stepanov semi-anti-periodic functions are considered, as well. We analyze the invariance of introduced classes under the actions of convolution products
and briefly explain how one can use the obtained results in the qualitative analysis of solutions of
abstract inhomogeneous integro-differential equations.
\end{abstract}

\maketitle

{\renewcommand{\thefootnote}{} \footnote{%
2010 \textit{Mathematics Subject Classification.} 34K14, 
42A75, 47D06. \newline
\text{ } \ \ \textit{Key words and phrases.} Semi-Bloch $k$-periodic functions, semi-anti-periodic functions, integro-differential equations. \newline
\text{ } \ \ This research is partially supported by grant 174024 of
Ministry of Science and Technological Development, Republic of Serbia.}}

\section{Introduction and preliminaries}

Let $p>0$ and $k\in {\mathbb R}.$ The study of Bloch $(p,k)$-periodic functions is an important subject of applied
functional analysis. In fact, this kind of functions is often encountered in
quantum mechanics and solid state physics. Also, they are very attractive
topic in the qualitative theory of differential equations (see the papers \cite{has2} by  M. F. Hasler, \cite{has1} by M. F. Hasler, G.M.N'. Gu\'{e}r\'{e}kata, \cite{kostic2} by M. Kosti\' c, D. Velinov and references cited therein). As it is well known, 
the notion of an anti-periodic function is a special case of the notion of a Bloch $(p,k)$-periodic function. For more details about anti-periodic type functions and their applications, we 
refer the reader to \cite{Ch, dimba, guer2, kosticdaniel, Li, Liu} and references cited therein.

The genesis of this paper is motivated by reading the paper \cite{andres1} by J. Andres and D. Pennequin, 
where the authors have introduced and analyzed the class of semi-periodic functions (sequences) and related 
applications to differential (difference) equations; see also \cite{andres2}. For the sake of brevity, in this paper
we will consider only functions and differential equations. 

The main ideas and organization of paper can be briefly described as follows. After collecting necessary definitions and results needed for our further work, in Section 2
we introduce and study the classes of semi-Bloch $k$-periodic functions and
semi-anti-periodic functions (cf. Definition \ref{saw} and Definition \ref{semi-anti}). Any semi-anti-periodic function is semi-periodic and almost anti-periodic (\cite{kosticdaniel}), while any anti-periodic function is anti-semi-periodic (cf. Proposition \ref{zajebano}). In actual fact, the space consisting of all semi-anti-periodic 
functions is nothing else but the closure of space of all anti-periodic functions in the space of bounded continuous functions (cf. Proposition \ref{novine}). Similarly, the space consisting of all 
semi-Bloch $k$-periodic functions is, in fact, the closure of the space consisting of all Bloch $(p,k)$-periodic functions, where the parameter $p>0$ runs through the positive real axis (cf. Proposition \ref{okje}).
In the same proposition, we show that a function $f(\cdot)$ is semi-Bloch $k$-periodic iff the function $e^{-ik\cdot}f(\cdot)$ is semi-periodic; with $k=0,$ this space is therefore reduced to the space of semi-periodic functions.  
We provide several new observations about Bloch $(p,k)$-periodic functions; for example, any Bloch $(p,k)$-periodic function needs to be almost periodic; see also Remark \ref{fela}.
Finally, in Definition \ref{sqaz} we introduce the notions of asymptotical semi-periodicity (asymptotical semi Bloch $k$-periodicity, asymptotical semi-anti-periodicity). 

In \cite{andres1}, the authors have considered only continuous functions. The main aim of Section 3 is to introduce and analyze the classes of Stepanov semi-Bloch $k$-periodic functions and
Stepanov semi-anti-periodic functions. The invariace of introduced classes under the actions of convolution products
are considered in Section 4, where it is shortly explained, without giving full details, how one can use the obtained results in the qualitative analysis of solutions of
abstract inhomogeneous integro-differential equations and inclusions. In addition to the above, we present several illustrative examples of Stepanov semi-Bloch $k$-periodic functions and Stepanov semi-periodic-functions in Section 2 and Section 3.

We use the standard terminology throughout the paper. Unless stated otherwise, we will always assume henceforth that $(E,\| \cdot \|)$ and $(X,\| \cdot \|_{X})$ are two complex Banach spaces. By $L(E,X)$ we denote the space consisting of all bounded continuous mappings from $E$ into $X;$ $L(E)\equiv L(E,E).$
Assume that $I={\mathbb R}$ or $I=[0,\infty).$ By $C_{b}(I:E)$ we denote the space consisting of all bounded continuous functions from $I$ into $E;$ $C_{0}([0,\infty):E)$ denotes the closed subspace of $C_{b}(I:E)$ consisting of all functions vanishing at infinity.
By $BUC(I:E)$ we denote the space consisted of all bounded uniformly continuous functions $f: I \rightarrow E.$
Finally, by $P_{c}(I :E)$ we denote the space consisting of all continuous periodic functions $f: I\rightarrow E.$ The sup-norm turns these spaces into Banach's.
We will use the symbol $\ast$ to denote the infinite convolution product; generally, if the functions $a :{\mathbb R}\rightarrow {\mathbb C}$ and
$f : {\mathbb R}\rightarrow E$ are measurable, then we denote $(a\ast f)(x):=\int_{-\infty}^{+\infty}a(x-y)f(y)\, dy,$ $x\in {\mathbb R}$, if this integral is well-defined. 

The concept of almost periodicity was introduced by Danish mathematician H. Bohr around 1924-1926 and later generalized by many other authors. Suppose that a function $f : I \rightarrow E$ is continuous. Given $\epsilon>0,$ we call $\tau>0$ an $\epsilon$-period for $f(\cdot)$ iff
\begin{align*}
\| f(t+\tau)-f(t) \| \leq \epsilon,\quad t\in I.
\end{align*}
The set constituted of all $\epsilon$-periods for $f(\cdot)$ is denoted by $\vartheta(f,\epsilon).$ It is said that $f(\cdot)$ is almost periodic iff for each $\epsilon>0$ the set $\vartheta(f,\epsilon)$ is relatively dense in $[0,\infty),$ which means that
there exists $l>0$ such that any subinterval of $[0,\infty)$ of length $l$ meets $\vartheta(f,\epsilon)$.
The space consisted of all almost periodic functions from the interval $I$ into $E$ will be denoted by $AP(I:E).$ Equipped with the sup-norm, $AP(I:X)$ becomes a Banach space. Bohr's transform of any function $f\in AP(I:X),$ defined by
$$
P_{r}(f) := \lim_{t\rightarrow \infty}\frac{1}{t}\int^{t}_{0}e^{-irs}f(s)\, ds,
$$
exists for all $r\in {\mathbb R}.$ The element $
P_{r}(f)$ is called the Bohr's coefficient of $f(\cdot)$ and we know that, if $P_{r}(f) = 0$ for all $r \in {\mathbb R},$ then $f(t) = 0$ for all $t \in I$ as well as that
Bohr's spectrum $\sigma(f):=\{r\in {\mathbb R} :  P_{r}(f) \neq 0\}$ is at most countable. 
By $AP_{\Lambda}(I:X),$ where $ \Lambda$ is a non-empty subset of $I,$ we denote the vector subspace of $AP(I:X)$ consisting of all functions $f\in AP(I:X)$ for which the inclusion $\sigma(f)\subseteq \Lambda$ holds good.

For the sequel, we need some preliminary results from the pioneering paper \cite{Ba} by H. Bart and S. Goldberg, who introduced the notion of an almost periodic strongly continuous semigroup there.
The translation semigroup $(W(t))_{t\geq 0}$ on $AP([0,\infty) : E),$ given by $[W(t)f](s):=f(t+s),$ $t\geq 0,$ $s\geq 0,$ $f\in AP([0,\infty) : E)$ is consisted solely of surjective isometries $W(t)$ ($t\geq 0$) and can be extended to a $C_{0}$-group $(W(t))_{t\in {\mathbb R}}$ of isometries on $AP([0,\infty) : E),$ where $W(-t):=W(t)^{-1}$ for $t>0.$ Furthermore, the mapping ${\mathbb E} : AP([0,\infty) : E) \rightarrow AP({\mathbb R} : E),$ defined by
$$
[{\mathbb E} f](t):=[W(t)f](0),\quad t\in {\mathbb R},\ f\in AP([0,\infty) : E),
$$
is a linear surjective isometry and ${\mathbb E}f(\cdot)$ is the unique continuous almost periodic extension of function $f(\cdot)$ from $AP([0,\infty) : E)$ to the whole real line. We have that $
[{\mathbb E}(Bf)]=B({\mathbb E}f)$ for all $B\in L(E)$ and $f\in  AP([0,\infty) : E).$

Let $1\leq q <\infty.$ A function $f\in L^{q}_{loc}(I :E)$ is Stepanov $q$-bounded iff\index{function!Stepanov bounded}
$$
\|f\|_{S^{q}}:=\sup_{t\in I}\Biggl( \int^{t+1}_{t}\|f(s)\|^{q}\, ds\Biggr)^{1/q}<\infty.
$$
Equipped with the above norm, the space $L_{S}^{q}(I:E)$ consisted of all $S^{q}$-bounded functions is the Banach space.
A function $f\in L_{S}^{q}(I:E)$ is said to be Stepanov $q$-almost periodic, $S^{q}$-almost periodic shortly, iff the function
$
\hat{f} : I \rightarrow L^{q}([0,1] :E),
$ defined by
\begin{align}\label{bruka}
\hat{f}(t)(s):=f(t+s),\quad t\in I,\ s\in [0,1],
\end{align}
is almost periodic .
It is said that $f\in  L_{S}^{q}([0,\infty): E)$ is asymptotically Stepanov $q$-almost periodic iff the function $\hat{f} : [0,\infty) \rightarrow L^{q}([0,1]:E)$ is asymptotically almost periodic.

Further on, 
let us recall that $f(\cdot)$ is anti-periodic iff there exists $p>0$ such that $f(x+p)=-f(x),$ $x\in I.$ Any such function needs to be periodic, as it can be easily proved.
Given $\epsilon>0,$ we call $\tau>0$ an $\epsilon$-antiperiod for $f(\cdot)$ iff
\begin{align*}
\| f(t+\tau) +f(t) \| \leq \epsilon,\quad t\in I.
\end{align*}
By $\vartheta_{ap}(f,\epsilon)$ we denote the set of all $\epsilon$-antiperiods for $f(\cdot).$
The notion of an almost anti-periodic function has been recently introduced in \cite[Definition 2.1]{kosticdaniel} as follows:

\begin{defn}\label{anti-periodic}
It is said that $f(\cdot)$ is almost anti-periodic iff for each $\epsilon>0$ the set $\vartheta_{ap}(f,\epsilon)$ is relatively dense in $[0,\infty).$
\end{defn}

We know that any almost anti-periodic function needs to be almost periodic. Denote by $ANP_{0}(I : E)$ the linear span of almost anti-periodic functions from $I$ into $X.$ Then \cite[Theorem 2.3]{kosticdaniel} implies that $ANP_{0}(I : E)$ is a linear subspace of $AP(I : E)$ as well as that the linear closure of $ANP_{0}(I : E)$ in $AP(I : E),$
denoted by 
$ANP(I : E),$ satisfies
\begin{align}\label{supa}
ANP(I : E)=AP_{{\mathbb R} \setminus \{0\}}(I : E).
\end{align}

Let $1\leq q<\infty.$ We refer the reader to \cite[Definition 3.1]{kosticdaniel} for the notions of an asymptotically
almost anti-periodic function $f :[0,\infty) \rightarrow E$ and an asymptotically Stepanov $q$-almost anti-periodic function $f :[0,\infty) \rightarrow E.$

For more details about almost periodic and almost automorphic type functions, we refer the reader to the monographs \cite{besi} by A. S. Besicovitch, \cite{Di} by T. Diagana, \cite{guer1} by G.M.N'. Gu\'{e}r\'{e}kata and \cite{kostic1} by M. Kosti\' c.

\section{Semi-Bloch $k$-periodicity}\label{prvasekc}

We will always assume that $I=[0,\infty )$ or $I={\mathbb{R}};$
set ${\mathbb S}:={\mathbb N}$ if $I=[0,\infty),$ and
${\mathbb S}:={\mathbb Z}$ if $I={\mathbb R}.$

For the convenience of the reader, we must recall that a bounded continuous
function $f:I\rightarrow E$ is said to be Bloch $(p,k)$-periodic, or Bloch
periodic with period $p$ and Bloch wave vector or Floquet exponent $k$ iff 
$
f(x+p)=e^{ikp}f(x),$ $ x\in I,
$
with $p>0$ and $k\in {\mathbb{R}}$.
The space of all functions $f:I\rightarrow E$ that are Bloch $(p,k)$-periodic will be denoted by ${\mathcal{B}}_{p,k}(I:E)$. 
If $f\in {\mathcal{B}}_{p,k}(I:E),$ then we have
\begin{align*}
f(x+mp)=e^{ikmp}f(x),\quad x\in I,\ m\in {\mathbb S}.
\end{align*}
Given $k\in {\mathbb{R}},$ we 
set ${\mathcal{B}}_{k}(I:E):=\bigcup_{p> 0}{\mathcal{B}}_{p,k}(I:E).$
Observing that $f\in P_{c}(I:X)$ satisfies $f(x+p)=f(x)$ for all $x\in I$ and some $p>0$ iff the function
$F(x):=e^{ikx}f(x),$ $x\in I$ satisfies $F(x+p)=e^{ikp}F(x),$ $x\in I$, we may conclude that
\begin{align}\label{droq}
{\mathcal{B}}_{k}(I:E):=\bigl\{e^{ik\cdot} f(\cdot) : f\in P_{c}(I:E)\bigr\}.
\end{align}

For more
details on the Bloch $(p,k)$-periodic functions, see \cite{has1}, \cite{kostic2} and references cited therein. 

Let us define the notion of a semi-Bloch $k$-periodic function as follows:

\begin{defn}\label{saw}
Let $k\in {\mathbb{R}}.$
A function $f\in C_{b}(I:E)$ is said to be
semi-Bloch $k$-periodic iff 
\begin{equation}\label{dar}
\forall \varepsilon >0\quad \exists p> 0\quad \forall m\in {\mathbb{S}}%
\quad \forall x\in I \quad \Vert f(x+mp)-e^{ikmp}f(x)\Vert \leq
\varepsilon .
\end{equation}
The space of all semi-Bloch $k$-periodic functions will be denoted by 
$
\mathcal{S}B_{k}(I:E)$.
\end{defn}

It is clear that Definition \ref{saw} provides a generalization of \cite[Definition 2 and Definition 3]{andres1}, given only in the case that $I={\mathbb R}.$ Speaking-matter-of-factly, a function $f: {\mathbb R} \rightarrow E$ is semi-periodic in the sense of above-mentioned (equivalent) definitions iff $f: {\mathbb R} \rightarrow E$ is semi-Bloch $0$-periodic. Further on,
it can be easily seen that for each $k\in {\mathbb R}$ any constant function $f\equiv c$ belongs to the space $
\mathcal{S}B_{k}(I:E);$ for this, it is only worth noticing that for each $\epsilon>0$ and $k\neq 0$ we can take $p=2\pi/k$ and \eqref{dar} will be satisfied.

\begin{rem}\label{fela}
It is not so easy to introdude the concept of almost Bloch $k$-periodicity, where $k\in {\mathbb R}.$ In order to explain this in more detail,
assume that a function $f\in C_{b}(I:E)$ and a number $\varepsilon >0$ are given. Let us say that a real number $p> 0$ is an $(\varepsilon,k) $-Bloch period
for $f(\cdot )$ iff 
\begin{align}\label{pop}
\Bigl\| f(x+p)-e^{ikp}f(x)\Bigr\|\leq \varepsilon ,\quad x\in I,
\end{align}
and $f(\cdot )$ is almost Bloch $k$-periodic iff for each $%
\varepsilon >0$ 
the set constituted of all $(\varepsilon,k) $-Bloch periods
for $f(\cdot )$ is relatively dense in $[0,\infty).$ But, then we have $f(\cdot)$ is almost Bloch $k$-periodic 
iff $f(\cdot)$ is almost periodic. To see this, it suffices to observe that
\eqref{pop} is equivalent with
$$
\Bigl\| e^{-ik(x+p)}f(x+p) -e^{-ik x}f(x)\Bigr\|\leq \epsilon,\quad x\in I,
$$
so that, actually, the function $f(\cdot)$ is almost Bloch $k$-periodic iff the function $e^{-ik\cdot}f(\cdot)$ is almost periodic, which is equivalent to say that 
the function $f(\cdot)$ is almost periodic due to \cite[Theorem 2.1.1(ii)]{kostic1}. Further on, let  $f(\cdot) \in{\mathcal{S}B}_{k}(I
:E).$ Then for each number $\epsilon>0$ we have that the set constituted of all $(\varepsilon,k) $-Bloch periods
for $f(\cdot )$ is relatively dense in $[0,\infty)$ since it contains the set $\{mp : m\in {\mathbb N}\},$ where $p>0$ is determined by \eqref{dar}. In view of our previous conclusions, we get that $f(\cdot)$ is almost periodic. In particular, any Bloch $(p,k)$-periodic function needs to be almost periodic, which has not been observed in the researches of Bloch periodic functions carried out so far (see, e.g., \cite{dimba}-\cite{has1} and \cite{kostic2}). 
\end{rem}

Now we will prove the following fundamental result:

\begin{prop}\label{okje}
Let $k\in {\mathbb R}$ and $f\in C_{b}(I:E).$ Then the following holds:
\begin{itemize}
\item[(i)] $f(\cdot)$ is semi-Bloch $k$-periodic iff  $e^{-ik\cdot}f(\cdot)$ is semi-periodic.  
\item[(ii)] $f(\cdot)$ is semi-Bloch $k$-periodic iff there exists a sequence $(f_{n})$ in $P_{c}(I :E)$ such that $\lim_{n\rightarrow \infty}e^{ikx}f_{n}(x)=f(x)$ uniformly in $I.$
\item[(iii)] $f(\cdot)$ is semi-Bloch $k$-periodic iff there exists a sequence $(f_{n})$ in ${\mathcal B}_{k}(I:E)$ such that $\lim_{n\rightarrow \infty}f_{n}(x)=f(x)$ uniformly in $I.$
\end{itemize}
\end{prop}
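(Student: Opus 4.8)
The plan is to prove (i) by an elementary substitution and then to read off (ii) and (iii) from it together with the known description of semi-periodic functions. I would first note that (iii) is merely a reformulation of (ii): by \eqref{droq} a function lies in $\mathcal{B}_{k}(I:E)$ precisely when it has the form $e^{ik\cdot}g(\cdot)$ with $g\in P_{c}(I:E)$, so a uniformly convergent sequence $(f_{n})$ in $\mathcal{B}_{k}(I:E)$ is literally the same object as a sequence $(g_{n})$ in $P_{c}(I:E)$ with $e^{ik\cdot}g_{n}(\cdot)$ converging uniformly. Hence it is enough to prove (i) and (ii).

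For (i), I would put $g(x):=e^{-ikx}f(x)$; since $|e^{-ikx}|=1$ the map $f\mapsto g$ is an isometric bijection of $C_{b}(I:E)$ onto itself, and multiplying the inequality in \eqref{dar} by the unimodular factor $e^{-ik(x+mp)}$ converts it into
\begin{align*}
\bigl\| g(x+mp)-g(x)\bigr\|=\bigl\| f(x+mp)-e^{ikmp}f(x)\bigr\|,\qquad m\in\mathbb{S},\ x\in I.
\end{align*}
Thus, for a fixed $\varepsilon>0$, condition \eqref{dar} for $f$ with some $p>0$ is the same as the $k=0$ condition for $g$ with the same $p$; by Definition \ref{saw} taken with $k=0$, this says exactly that $f\in\mathcal{S}B_{k}(I:E)$ if and only if $e^{-ik\cdot}f(\cdot)$ is semi-periodic.

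For (ii), the straightforward implication starts from an approximating sequence: if $f_{n}\in P_{c}(I:E)$ has period $p_{n}>0$ and $e^{ik\cdot}f_{n}(\cdot)\to f$ uniformly, then given $\varepsilon>0$ I would choose $n$ with $\sup_{x\in I}\|e^{ikx}f_{n}(x)-f(x)\|\leq\varepsilon/2$ and check, using $f_{n}(x+mp_{n})=f_{n}(x)$ for $m\in\mathbb{S}$, $|e^{ikmp_{n}}|=1$ and the triangle inequality, that $\|f(x+mp_{n})-e^{ikmp_{n}}f(x)\|\leq\varepsilon$ for all $x\in I$ and $m\in\mathbb{S}$, so that $f\in\mathcal{S}B_{k}(I:E)$ with $p=p_{n}$ in \eqref{dar}. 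For the converse I would invoke (i): $g:=e^{-ik\cdot}f(\cdot)$ is semi-periodic, hence by the structure of semi-periodic functions from \cite{andres1} (the equivalence of Definitions 2 and 3 there) it is the uniform limit of a sequence $(g_{n})$ in $P_{c}(I:E)$; then $f_{n}:=g_{n}$ works, since $e^{ik\cdot}g_{n}(\cdot)\to e^{ik\cdot}g(\cdot)=f$ uniformly. As observed, (iii) now follows from (ii) through \eqref{droq}.

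The hard part is exactly the converse direction of (ii): that every semi-Bloch $k$-periodic function is a uniform limit of Bloch $(p,k)$-periodic functions --- equivalently, after the twist by $e^{-ik\cdot}$, that a semi-periodic function is approximable uniformly by continuous periodic functions. This is the one genuinely nontrivial ingredient; I would take it from \cite{andres1}, but if one wanted to be self-contained it can be obtained by a Ces\`aro-averaging (mean-ergodic) argument: if $p>0$ is a common $1/n$-bound in the semi-periodicity condition for $g$, then the averages $N^{-1}\sum_{j=0}^{N-1}g(\cdot+jp)$ converge uniformly (the translates $g(\cdot+jp)$ forming a relatively compact set, as $g$ is almost periodic) to a continuous $p$-periodic function lying within $1/n$ of $g$ in the sup-norm. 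A last minor point is the passage between $I=\mathbb{R}$ and $I=[0,\infty)$, which I would handle by first passing to the canonical almost periodic extension $\mathbb{E}g$ to $\mathbb{R}$ (itself semi-periodic), applying the result on $\mathbb{R}$, and then restricting the periodic approximants to $[0,\infty)$.
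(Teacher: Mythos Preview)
Your proof is correct and follows essentially the same route as the paper: part (i) by the unimodular substitution, part (ii) from (i) together with the characterization of semi-periodic functions in \cite{andres1}, and part (iii) from (ii) via \eqref{droq}. You supply more detail than the paper does---an explicit triangle-inequality check for the easy direction of (ii), a self-contained Ces\`aro/mean-ergodic alternative for the hard direction, and an extension argument for the case $I=[0,\infty)$---whereas the paper simply asserts that \cite[Lemma 1 and Theorem 1]{andres1} carry over to $[0,\infty)$.
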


\begin{proof}
The proof of (i) follows similarly as in Remark \ref{fela}. Since \cite[Lemma 1 and Theorem 1]{andres1} hold for the functions defined on the interval $I=[0,\infty),$ we have that (i) implies that $f(\cdot)$ is semi-Bloch $k$-periodic iff there exists a sequence $(f_{n})$ in $P_{c}(I:E)$ such that
$\lim_{n\rightarrow \infty}e^{ikx}f_{n}(x)=f(x)$ uniformly in $I.$ This proves (ii). For the proof of (iii), it suffices to apply (ii), \eqref{droq} and the conclusion preceding it.
\end{proof}

Let $k\in {\mathbb R}$. Using Proposition \ref{okje} and \cite[Proposition 2]{andres1}, we may construct a substantially large class of semi-Bloch $k$-periodic functions, which do not form a vector space due to a simple example in 
the second part of \cite[Remark 3]{andres1};
\cite[Lemma 2]{andres1} can be straightforwardly reformulated for semi-Bloch $k$-periodic functions, while the function given in \cite[Example 1]{andres1} can be simply used to provide an example of a scalar-valued semi-Bloch $k$-periodic function
which is not contained in the space ${\mathcal B}_{k}(I: {\mathbb C}).$ If we define Bloch $k$-quasi periodic function
\begin{align}
{\mathcal{B}}_{k;q}(I:E):=\bigl\{e^{ik\cdot} f(\cdot) : f\in QP^{0}(I:E)\bigr\},
\end{align}
where $QP^{0}(I:E)$ denotes the space of all quasi-periodic functions from $I$ into $E$ (see \cite{andres1} and references cited therein for the notion),
then
\cite[Theorem 2]{andres1} can be also reformulated in our context; this also holds for \cite[Example 2, Example 3]{andres1}.

By the foregoing, we have:
\begin{equation*}
{\mathcal{B}}_{k}(I :E)\subseteq {\mathcal{S}B}_{k}(I
:E)\subseteq AP(I:E)\subseteq BUC(I:E),\quad  k\in {\mathbb R} .
\end{equation*}

\begin{example}\label{kosinus}
The function $f(x):=\cos x,$ $x\in {\mathbb R}$ is anti-periodic. Now we will prove that $f\in{\mathcal{S}B}_{k}(I
:E)$ iff $k\in {\mathbb Q}.$ For $k\in {\mathbb Q},$ this is clear because we can take $p$ in \eqref{dar} as a certain multiple of $2\pi$. Let us assume now that 
$k\notin {\mathbb Q}.$ Then it suffices to show that the function $e^{-ik\cdot}f(\cdot)$ is not semi-periodic. Towards see this, let us observe that $\sigma(e^{-ik\cdot}f(\cdot))=\{1-k,-1-k\}$ so that there does not exist a positive real number $\theta>0$ 
such that $\sigma(e^{-ik\cdot}f(\cdot)) \subseteq \theta \cdot {\mathbb Q},$ which can be simply approved and which contradicts \cite[Lemma 2]{andres1}.
\end{example}

\begin{rem}\label{a-ova}
Let $a\in AP(I:{\mathbb C}).$ 
Based on Proposition \ref{okje}, we can introduce and analyze the following notion: A function $f\in C_{b}(I:E)$ is said to be semi-$a$-periodic iff there exists a sequence $(f_{n})$ in $P_{c}(I :E)$ such that $\lim_{n\rightarrow \infty}a(x)f_{n}(x)=f(x)$ uniformly in $I.$
Any such function needs to be almost periodic due to \cite[Theorem 2.1.1(ii)]{kostic1}. We will analyze this notion somewhere else.
\end{rem}

Further on, if $f : I \rightarrow E$ is anti-periodic and $f(x+p)=-f(x),$ $x\in I$ for some $p>0,$  
then we have $f(x+mp)=(-1)^{m}f(x),$ $x\in I$, $m\in {\mathbb S}.$ Therefore, it is meaningful to consider the following notion:

\begin{defn}\label{semi-anti}
Let $f\in C_{b}(I:E).$
Then we say that $f(\cdot )$ is semi-anti-periodic iff
\begin{equation*}
\forall \varepsilon >0\quad \exists p> 0\quad \forall m\in {\mathbb{S}}%
\quad \forall x\in I \quad \Vert f(x+mp)-(-1)^{m}f(x)\Vert \leq
\varepsilon .
\end{equation*}
The space of all semi-anti-periodic functions will be denoted by 
$
\mathcal{SANP}(I:E)$.
\end{defn}
It immediately follows from the foregoing that any anti-periodic function is semi-anti-periodic. Furthermore, 
any semi-anti-periodic function $f : I \rightarrow E$ is both semi-periodic and almost anti-periodic. To see this, let $\epsilon>0$ be given in advance. Then we can find $p>0$ such that
\eqref{dar} holds with $\epsilon$ replaced by $\epsilon/2$ therein. Then \eqref{dar} holds with $p$ replaced therein with $2p$ and $k=0,$ which follows from the simple estimates
\begin{align*}
\|f(x+2mp)-f(x)\| &\leq \| f(x+2mp)-(-1)^{m}f(x+mp) \| 
\\&+\|(-1)^{m}f(x+mp)-f(x)\|<\epsilon,\quad x\in I,\ m\in {\mathbb S}.
\end{align*}
Hence, $f(\cdot)$ is semi-periodic. To see that $f(\cdot)$ is almost anti-periodic, notice only that \eqref{dar} implies 
$$
\|f(x+(2m+1)p)+f(x)\| \leq \epsilon,\quad x\in I,\ m\in {\mathbb S}
$$
as well as the set $\{(2m+1)p : m\in {\mathbb N}\}$ is relatively dense in $[0,\infty).$ Therefore, we have proved the following

\begin{prop}\label{zajebano}
Let $f\in C_{b}(I:E).$ 
\begin{itemize}
\item[(i)] If $f(\cdot)$ is anti-periodic, then $f(\cdot)$ is semi-anti-periodic.
\item[(ii)] If $f(\cdot)$ is semi-anti-periodic, then $f(\cdot)$ is semi-periodic and almost anti-periodic.
\end{itemize}
\end{prop}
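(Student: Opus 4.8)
The plan is to prove both parts directly from the definitions, since in each case a period is essentially supplied to us and the remaining work is triangle-inequality estimates together with careful sign bookkeeping.

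For part (i), I would begin with the hypothesis that $f$ is anti-periodic, so that $f(x+p)=-f(x)$ for all $x\in I$ and some fixed $p>0$. Iterating this relation forward yields $f(x+mp)=(-1)^{m}f(x)$ for all $m\in\mathbb{N}$ and $x\in I$; when $I=\mathbb{R}$ one extends it to negative $m$ by applying the relation at the point $x+mp$. Then, for an arbitrary $\varepsilon>0$, this same $p$ witnesses the condition in Definition \ref{semi-anti}, because $\|f(x+mp)-(-1)^{m}f(x)\|=0\leq\varepsilon$ identically. So (i) is immediate.

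For part (ii), fix $\varepsilon>0$ and, using semi-anti-periodicity of $f$, choose $p>0$ such that $\|f(x+mp)-(-1)^{m}f(x)\|\leq\varepsilon/2$ for all $m\in\mathbb{S}$ and $x\in I$. To obtain semi-periodicity I would check that $2p$ works for this $\varepsilon$: from
\begin{align*}
\|f(x+2mp)-f(x)\| &\leq \|f(x+2mp)-(-1)^{m}f(x+mp)\| \\ &\quad+\|(-1)^{m}f(x+mp)-f(x)\|,
\end{align*}
the first term is $\leq\varepsilon/2$ upon applying the chosen inequality at the point $x+mp$ with index $m$, and the second term equals $\|f(x+mp)-(-1)^{m}f(x)\|\leq\varepsilon/2$ after pulling the unimodular factor $(-1)^{m}$ out of the norm (using $(-1)^{2m}=1$); hence the sum is $\leq\varepsilon$, which is precisely \eqref{dar} with $k=0$ and period $2p$. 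To obtain almost anti-periodicity, I would restrict the semi-anti-periodicity inequality to odd indices $m=2j+1$, obtaining $\|f(x+(2j+1)p)+f(x)\|\leq\varepsilon/2$ for all $x\in I$ and $j\in\mathbb{N}$; thus every number $(2j+1)p$ lies in $\vartheta_{ap}(f,\varepsilon)$, and since $\{(2j+1)p:j\in\mathbb{N}\}$ is relatively dense in $[0,\infty)$ (successive terms differ by $2p$), the set $\vartheta_{ap}(f,\varepsilon)$ is relatively dense, so $f$ is almost anti-periodic by Definition \ref{anti-periodic}.

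There is no genuine obstacle here; the only point demanding attention is the sign bookkeeping — consistently using $(-1)^{2m}=1$ to move $(-1)^{m}$ through the norm, and, in part (i) with $I=\mathbb{R}$, extending the identity $f(x+mp)=(-1)^{m}f(x)$ to all $m\in\mathbb{Z}$. Once those are handled, both implications fall out of the definitions.
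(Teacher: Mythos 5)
Your proof is correct and follows essentially the same route as the paper's: part (i) via iterating $f(x+p)=-f(x)$ to get $f(x+mp)=(-1)^{m}f(x)$, and part (ii) via the same triangle-inequality splitting through $(-1)^{m}f(x+mp)$ with $\varepsilon/2$ and period $2p$, plus the odd multiples $(2m+1)p$ as a relatively dense set of $\varepsilon$-antiperiods. Your explicit attention to moving $(-1)^{m}$ through the norm and to negative $m$ when $I=\mathbb{R}$ is slightly more careful than the paper's write-up but does not change the argument.
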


Furthermore, applying the same arguments as in the proofs of \cite[Lemma 1 and
 Theorem 1]{andres1}, we may conclude that the following holds:

\begin{prop}\label{novine}
Let $f\in C_{b}(I :E).$ Then $f(\cdot)$ is semi-anti-periodic iff there exists a sequence $(f_{n})$ of anti-periodic functions in $C_{b}(I :E)$ such that $\lim_{n\rightarrow \infty}f_{n}(x)=f(x)$ uniformly in $I.$   
\end{prop}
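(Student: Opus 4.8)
\textbf{Proof proposal for Proposition \ref{novine}.}
The plan is to mimic the proof strategy of \cite[Lemma 1 and Theorem 1]{andres1}, transported to the anti-periodic setting, and also to exploit the bijective correspondence $f \leftrightarrow f$ between semi-anti-periodicity and semi-periodicity of a companion function, if such a correspondence can be set up cleanly. First I would establish the easy direction: if $(f_{n})$ is a sequence of anti-periodic functions in $C_{b}(I:E)$ converging uniformly to $f$, then given $\varepsilon>0$ pick $n$ with $\|f_{n}-f\|_{\infty}\le \varepsilon/3$, let $p>0$ be an anti-period of $f_{n}$, so that $f_{n}(x+mp)=(-1)^{m}f_{n}(x)$ for all $x\in I$, $m\in {\mathbb S}$; then the triangle inequality
$\|f(x+mp)-(-1)^{m}f(x)\| \le \|f(x+mp)-f_{n}(x+mp)\| + \|f_{n}(x+mp)-(-1)^{m}f_{n}(x)\| + \|(-1)^{m}f_{n}(x)-(-1)^{m}f(x)\| \le \varepsilon$
gives the defining condition of $\mathcal{SANP}(I:E)$. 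This direction uses only that uniform limits are taken inside a $\sup$, so it is routine.

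For the converse — every semi-anti-periodic $f$ is a uniform limit of anti-periodic functions — I would follow the construction underlying \cite[Theorem 1]{andres1}. Fix a decreasing sequence $\varepsilon_{n}\downarrow 0$ and, for each $n$, use Definition \ref{semi-anti} to produce $p_{n}>0$ with $\|f(x+mp_{n})-(-1)^{m}f(x)\|\le \varepsilon_{n}$ for all $x\in I$, $m\in{\mathbb S}$. The idea is to average (or otherwise project) $f$ against the ``almost-anti-period'' $p_{n}$ to manufacture a genuinely anti-periodic function $f_{n}$ that stays within $O(\varepsilon_{n})$ of $f$; concretely, one expects a formula of Fej\'er/averaging type, e.g.\ forming a suitable alternating Ces\`aro mean $f_{n}(x):=\frac{1}{N_{n}}\sum_{j=0}^{N_{n}-1}(-1)^{j} f(x+jp_{n})$ or a continuous analogue, which automatically satisfies $f_{n}(x+p_{n})=-f_{n}(x)$ up to boundary terms that vanish as $N_{n}\to\infty$, and which one then corrects to be exactly anti-periodic. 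The key estimate is to bound $\|f_{n}-f\|_{\infty}$ by a quantity controlled by $\varepsilon_{n}$, using the telescoping that the alternating sum induces together with the defining inequality for $p_{n}$.

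An alternative, and probably cleaner, route is to reduce directly to the already-proven semi-periodic case. By Proposition \ref{zajebano}(ii), $f$ semi-anti-periodic implies $f$ semi-periodic; moreover the condition $\|f(x+(2m+1)p)+f(x)\|\le\varepsilon$ extracted in the discussion preceding Proposition \ref{zajebano} says $f$ is ``anti-periodic with a common almost-period.'' One can then apply \cite[Theorem 1]{andres1} to get periodic $g_{n}\in P_{c}(I:E)$ with $g_{n}\to f$ uniformly, and carefully track the anti-periodicity: because the approximants in the Andres--Pennequin construction are built from the translates $f(\cdot+jp_{n})$, the alternating structure of the hypothesis forces the natural modification of $g_{n}$ to satisfy $g_{n}(x+p_{n})=-g_{n}(x)$. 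Thus each $g_{n}$ is anti-periodic, and $g_{n}\to f$ uniformly is exactly the claim.

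The main obstacle is verifying that the approximation machinery of \cite{andres1} genuinely preserves the anti-periodic (alternating-sign) structure rather than merely the periodic one — i.e.\ that replacing $f(x+jp_{n})$ by $(-1)^{j}f(x+jp_{n})$ in the relevant Ces\`aro/averaging sums still produces a convergent scheme with the same error bounds, and that the limiting object has $p_{n}$ as a true anti-period. Once that bookkeeping is done, the error estimate $\|f_{n}-f\|_{\infty}=O(\varepsilon_{n})\to 0$ follows by the same telescoping argument as in the periodic case, and both implications are complete. \qed
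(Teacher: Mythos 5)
Your proposal takes essentially the same route as the paper, whose entire proof is the remark that one repeats the arguments of \cite[Lemma 1 and Theorem 1]{andres1} with periodicity replaced by anti-periodicity; your easy direction is complete, and the alternating averaging scheme is the correct way to carry out the converse. The only refinement worth recording is that no ad hoc ``correction'' step is needed: instead of truncating at $N_{n}$, take the full alternating mean $f_{n}(x):=\lim_{N\to\infty}\frac{1}{N}\sum_{j=0}^{N-1}(-1)^{j}f(x+jp_{n})$, which exists uniformly in $x$ because $f$ is almost periodic (via Proposition \ref{zajebano}(ii) and Remark \ref{fela}), so that $j\mapsto(-1)^{j}f(\cdot+jp_{n})$ is an almost periodic $C_{b}(I:E)$-valued sequence whose Ces\`aro means converge; the limit is exactly $p_{n}$-anti-periodic (the boundary term $\frac{1}{N}[(-1)^{N}f(x+Np_{n})-f(x)]$ vanishes) and satisfies $\|f_{n}-f\|_{\infty}\le\varepsilon_{n}$ because $\|(-1)^{j}f(x+jp_{n})-f(x)\|=\|f(x+jp_{n})-(-1)^{j}f(x)\|\le\varepsilon_{n}$.
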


Let $p>0$ and $k\in {\mathbb R}.$ If $f : {\mathbb R} \rightarrow E$ is periodic (anti-periodic, Bloch $(p,k)$-periodic) and $a\in L^{1}({\mathbb R}),$ then the function $a\ast f(\cdot)$ is likewise periodic (anti-periodic, Bloch $(p,k)$-periodic). Using the Young inequality, Proposition \ref{okje} and Proposition \ref{novine}, it immediately follows that the 
space of semi-Bloch $k$-periodic functions and the space of semi-anti-periodic functions are convolution invariant. 

We continue by providing several illustrative examples:

\begin{example}\label{demos}
Let $f\equiv c\neq 0.$ Due to \eqref{supa}, $
f\notin ANP({\mathbb R} : E)$ and clearly $f(\cdot)$ is not semi-anti-periodic. On the other hand, $f(\cdot)$ is periodic and therefore semi-periodic.
\end{example}

\begin{example}\label{strina}
It can be simply verified that the function $f(x):=\sin x +\sin \pi x\sqrt{2},$ $x\in {\mathbb R}$ is almost anti-periodic but not semi-periodic (see, e.g., \cite[Remark 3]{andres1} and \cite[Example 2.1]{kosticdaniel}).
\end{example}

\begin{example}\label{strina1} (a slight modification of \cite[Example 1]{andres1})
The function
$$
f(x):=\sum_{n=1}^{\infty}\frac{e^{ix/(2n+1)}}{n^{2}},\quad x\in {\mathbb R} 
$$
is semi-anti-periodic because it is a uniform limit of $[\pi \cdot (2n+1)!!]$-anti-periodic functions
$$
f_{N}(x):=\sum_{n=1}^{\infty}\frac{e^{ix/(2n+1)}}{n^{2}},\quad x\in {\mathbb R} \ \ (N\in {\mathbb N}).
$$
On the other hand, the function $f(\cdot)$ cannot be periodic. 
\end{example}

\begin{example}\label{olomuc}
Set ${\mathbb Q}_{n}:=\{(2n+1)/(2m+1) : m,\ n\in {\mathbb Z}\}.$ If $\theta>0$ and $\sum_{\lambda \in \theta \cdot {\mathbb Q}_{n}}\|a_{\lambda}(f)\|<\infty,$ then the function
$$
f(t):=\sum_{\lambda \in \theta \cdot {\mathbb Q}_{n}}a_{\lambda}(f)e^{i\lambda t},\quad t\in {\mathbb R}
$$
is semi-anti-periodic. This can be inspected as in the proof of \cite[Proposition 2]{andres1} since the function $f_{N}(\cdot)$ used therein is anti-periodic with the anti-period $\pi q_{1}\cdot \cdot \cdot q_{N}/\theta.$
\end{example}

\begin{example}\label{gaston}
Roughly speaking, it is well known that the unique solution of the heat equation $u_{t}(x,t)=u_{xx}(x,t),$ $x\in {\mathbb R},$ $t\geq 0,$ accompanied with the initial condition $u(x,0)=f(x),$ is given by
$$
u(x,t):=\frac{1}{2\sqrt{\pi t}}\int^{+\infty}_{-\infty}e^{-\frac{(x-s)^{2}}{4t}}f(s)\, ds,\quad x\in {\mathbb R},\ t\geq 0.
$$
By the conclusion from \cite[Example 2.1]{has1}, we know that, if the function $f(\cdot)$ is Bloch $(p,k)$-periodic, then the solution
$u(x,\cdot)$ is likewise Bloch $(p,k)$-periodic ($p>0,$ $k\in {\mathbb R}$). Using this fact, the dominated convergence theorem and Proposition \ref{okje}, we get that, if $f(\cdot)$ is semi-Bloch $k$-periodic, then the solution
$u(x,\cdot)$ will be likewise semi-Bloch $k$-periodic.
\end{example}

It is necessary to note that any of the above introduced function spaces becomes
a metric space when equipped with the metric 
\begin{equation*}
d(f,g):=\sup_{x\in I}\Vert f(x)-g(x)\Vert 
\end{equation*}
inherited from $C_{b}(I:E).$

Let us recall that, for every almost anti-periodic function $f : [0,\infty) \rightarrow E,$ we have that the function ${\mathbb E}f : {\mathbb R} \rightarrow E$ is likewise almost anti-periodic; the same assertion holds for the function $f : [0,\infty) \rightarrow E$
which belongs to the space $ANP_{0}([0,\infty) : E)$ or $ANP([0,\infty) : E);$ see \cite{kosticdaniel}. Now we will state and prove the following proposition:

\begin{prop}\label{danice}
Let $k\in {\mathbb R},$ let $p>0,$ and let a function $f\in C_{b}([0,\infty) : E)$ be given. If $f(\cdot)$ is Bloch $(p,k)$-periodic (semi-Bloch $k$-periodic, semi-anti-periodic), then the function ${\mathbb E}f(\cdot)$ is likewise 
Bloch $(p,k)$-periodic (semi-Bloch $k$-periodic, semi-anti-periodic).
\end{prop}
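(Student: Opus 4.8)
The plan is to handle each of the three cases by the same mechanism: the extension operator $\mathbb{E}$ is a linear surjective isometry commuting with bounded operators on $E$, and crucially it intertwines the translation structure, so any translation-type identity valid on $[0,\infty)$ transfers to $\mathbb{R}$. First I would treat the Bloch $(p,k)$-periodic case directly. If $f(x+p)=e^{ikp}f(x)$ for all $x\ge 0$, consider $g(x):=f(x+p)$ and $h(x):=e^{ikp}f(x)$; these are equal elements of $AP([0,\infty):E)$, hence $\mathbb{E}g=\mathbb{E}h$. Now $\mathbb{E}h = e^{ikp}\,\mathbb{E}f$ by linearity (scalar multiplication is a special case of $B\in L(E)$, or just by linearity of $\mathbb{E}$), and $\mathbb{E}g = \mathbb{E}(W(p)f) = W(p)(\mathbb{E}f)$ because $\mathbb{E}$ intertwines the translation group: indeed $[\mathbb{E}(W(p)f)](t) = [W(t)W(p)f](0) = [W(t+p)f](0) = [\mathbb{E}f](t+p)$. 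Combining, $[\mathbb{E}f](t+p) = e^{ikp}[\mathbb{E}f](t)$ for all $t\in\mathbb{R}$, i.e. $\mathbb{E}f$ is Bloch $(p,k)$-periodic.

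For the semi-Bloch $k$-periodic case I would use Proposition \ref{okje}(i): $f$ is semi-Bloch $k$-periodic on $[0,\infty)$ iff $e^{-ik\cdot}f(\cdot)$ is semi-periodic. So it suffices to show that $\mathbb{E}$ preserves semi-periodicity and that $\mathbb{E}(e^{-ik\cdot}f(\cdot)) = e^{-ik\cdot}(\mathbb{E}f)(\cdot)$; the latter identity follows from the intertwining computation above applied pointwise (or one checks $[\mathbb{E}(e^{-ik\cdot}f)](t) = [W(t)(e^{-ik\cdot}f)](0) = e^{-ikt}[W(t)f](0) = e^{-ikt}[\mathbb{E}f](t)$, using $[W(t)(e^{-ik\cdot}f)](s) = e^{-ik(t+s)}f(t+s)$). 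To see $\mathbb{E}$ preserves semi-periodicity, I would invoke Proposition \ref{okje}(ii)–(iii) or, more cleanly, the characterization that semi-periodic functions are exactly uniform limits of continuous periodic functions (\cite[Theorem 1]{andres1}, valid on $I=[0,\infty)$ by the remark in the proof of Proposition \ref{okje}): if $f_n\to f$ uniformly with $f_n\in P_c([0,\infty):E)$, then by the Bloch case (with $k=0$, or directly: periodic is a special case of Bloch $(p,0)$-periodic) each $\mathbb{E}f_n$ is periodic, and since $\mathbb{E}$ is an isometry, $\mathbb{E}f_n\to\mathbb{E}f$ uniformly on $\mathbb{R}$, so $\mathbb{E}f$ is semi-periodic. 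Then $e^{-ik\cdot}(\mathbb{E}f)(\cdot) = \mathbb{E}(e^{-ik\cdot}f(\cdot))$ is semi-periodic, hence $\mathbb{E}f$ is semi-Bloch $k$-periodic.

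For the semi-anti-periodic case I would argue identically using Proposition \ref{novine}: write $f$ as a uniform limit of anti-periodic functions $f_n\in C_b([0,\infty):E)$; an anti-periodic function with $f_n(x+p_n)=-f_n(x)$ satisfies the Bloch-type identity $\mathbb{E}f_n(t+p_n) = -\mathbb{E}f_n(t)$ by the same intertwining argument (the scalar $-1$ is in $L(E)$), so $\mathbb{E}f_n$ is anti-periodic; since $\mathbb{E}$ is an isometry, $\mathbb{E}f_n\to\mathbb{E}f$ uniformly on $\mathbb{R}$, and Proposition \ref{novine} gives that $\mathbb{E}f$ is semi-anti-periodic.

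The only genuinely substantive point — and the one I would make sure to state carefully — is the intertwining identity $\mathbb{E}\circ W(p) = W(p)\circ\mathbb{E}$ (equivalently $[\mathbb{E}f](t+p)=[\mathbb{E}(W(p)f)](t)$) together with its twisted version $[\mathbb{E}(e^{-ik\cdot}f)](t)=e^{-ikt}[\mathbb{E}f](t)$; both are one-line consequences of the definition $[\mathbb{E}f](t)=[W(t)f](0)$ and the group law $W(t)W(p)=W(t+p)$, but everything in the proof hinges on them. There is no real obstacle beyond bookkeeping: once the intertwining is in hand, the Bloch case is immediate, and the two semi-cases reduce to "$\mathbb{E}$ is an isometry, so it maps uniformly convergent sequences to uniformly convergent sequences and maps the relevant dense-subclass members (periodic / anti-periodic functions) to functions of the same type," which is exactly the content of Propositions \ref{okje} and \ref{novine}.
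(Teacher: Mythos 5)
Your proof is correct and follows essentially the same route as the paper: the Bloch case via the group law $W(t)W(p)=W(t+p)$ and linearity of ${\mathbb E}$, and the two semi cases by approximating with Bloch-periodic (resp.\ anti-periodic) functions through Propositions \ref{okje} and \ref{novine} and using that ${\mathbb E}$ is an isometry. The only cosmetic caveat is that the pointwise formula $[W(t)(e^{-ik\cdot}f)](s)=e^{-ik(t+s)}f(t+s)$ is not literal for $t<0$ (where $f(t+s)$ need not be defined), so the identity ${\mathbb E}(e^{-ik\cdot}f)=e^{-ik\cdot}{\mathbb E}f$ is better justified by uniqueness of the continuous almost periodic extension, or avoided altogether by approximating with functions from ${\mathcal B}_{k}([0,\infty):E)$ as in Proposition \ref{okje}(iii), which is what the paper does.
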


\begin{proof}
Suppose first that $f(\cdot)$ is Bloch $(p,k)$-periodic. Then $f(x+p)=e^{ikp}f(x),$ $x\geq 0;$ we need to show that $({\mathbb E}f)(x+p)=e^{ikp}({\mathbb E}f)(x),$ $x\in {\mathbb R},$ i.e., $[W(x+p)f](0)=e^{ikp}[W(x)f](0),$ $x\in {\mathbb R}.$
Since $W(x+p)=W(x)W(p),$ $x\in {\mathbb R},$ it suffices to show that $[W(x)f(\cdot +p)](0)=e^{ikp}[W(x)f](0),$ $x\in {\mathbb R},$ i.e., $[W(x)e^{ikp}f(\cdot)](0)=e^{ikp}[W(x)f](0),$ $x\in {\mathbb R},$ which is true. If $f(\cdot)$ is semi-Bloch $k$-periodic,
then Proposition \ref{okje}(iii) yields that there exists a sequence $(f_{n})$ in ${\mathcal B}_{k}( [0,\infty) : E)$ such that $\lim_{n\rightarrow \infty}f_{n}(x)=f(x)$ uniformly in $[0,\infty).$ Due to the supremum formula clarified in \cite[Theorem 2.1.1(xi)]{kostic1}, we have that 
$\lim_{n\rightarrow \infty}({\mathbb E}f_{n})(x)=({\mathbb E}f)(x)$ uniformly in ${\mathbb R}.$ By the first part of proof, we know that for each $n\in {\mathbb N}$ the function $({\mathbb E}f_{n})(\cdot)$ belongs to the space ${\mathcal B}_{k}({\mathbb R} : E)$. Applying again Proposition \ref{okje}(iii), we get that ${\mathbb E}f(\cdot)$ is likewise  semi-Bloch $k$-periodic. The proof is quite similar in the case that $f(\cdot)$ is semi-anti-periodic.
\end{proof}

The proof of following simple proposition is left to the interested reader:

\begin{prop}\label{krew}
Let  $k\in {\mathbb R},$ let $p>0,$ and let $f  : I\rightarrow X$. Then we have:
\begin{itemize}
\item[(i)] If $f(\cdot)$ is  
Bloch $(p,k)$-periodic (semi-Bloch $k$-periodic, semi-anti-periodic), then 
$cf(\cdot)$ is Bloch $(p,k)$-periodic (semi-Bloch $k$-periodic, semi-anti-periodic) for any $c\in {\mathbb C}.$
\item[(ii)] If $X={\mathbb C},$ $\inf_{x\in {\mathbb R}}|f(x)|=m>0$ and
$f(\cdot)$ is  
Bloch $(p,k)$-periodic (semi-Bloch $k$-periodic, semi-anti-periodic), then
$1/f(\cdot)$ is Bloch $(p,-k)$-periodic (semi-Bloch $(-k)$-periodic, semi-anti-periodic).
\end{itemize}
\end{prop}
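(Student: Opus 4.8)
The plan is to verify each part of Proposition \ref{krew} by a direct, mechanical check, reducing the semi-Bloch and semi-anti-periodic cases to the defining inequalities \eqref{dar} and Definition \ref{semi-anti} and exploiting the algebraic structure of the conditions. For part (i), multiplying by a scalar $c\in {\mathbb C}$ is essentially trivial: in the Bloch case, $cf(x+p)=ce^{ikp}f(x)=e^{ikp}(cf)(x)$; in the semi-Bloch case, given $\varepsilon>0$, choose $p>0$ so that \eqref{dar} holds with $\varepsilon$ replaced by $\varepsilon/(1+|c|)$, and then $\|cf(x+mp)-e^{ikmp}(cf)(x)\|=|c|\,\|f(x+mp)-e^{ikmp}f(x)\|\le\varepsilon$ for all $m\in {\mathbb S}$, $x\in I$; the semi-anti-periodic case is identical with $(-1)^{m}$ in place of $e^{ikmp}$. (For $c=0$ the statement is vacuous since the zero function lies in every class.)

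For part (ii), the key observation is the algebraic identity
\begin{equation*}
\frac{1}{f(x+mp)}-e^{-ikmp}\frac{1}{f(x)}=\frac{e^{ikmp}f(x)-f(x+mp)}{e^{ikmp}f(x)f(x+mp)},
\end{equation*}
whose numerator is (up to sign and the unit factor $e^{ikmp}$) exactly the quantity controlled by \eqref{dar}, while the denominator has modulus at least $m^{2}$ by the hypothesis $\inf_{x}|f(x)|=m>0$. Thus, given $\varepsilon>0$, first apply semi-Bloch $k$-periodicity of $f$ to produce $p>0$ with $\|f(x+mp)-e^{ikmp}f(x)\|\le m^{2}\varepsilon$ for all $m,x$; then the displayed identity gives $|1/f(x+mp)-e^{-ikmp}(1/f)(x)|\le\varepsilon$, which is precisely \eqref{dar} for $1/f$ with $k$ replaced by $-k$. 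The Bloch $(p,k)$-periodic case is the $\varepsilon=0$ version of this computation, and the semi-anti-periodic case follows by the same identity with $e^{ikmp}$ replaced by $(-1)^{m}$, noting that $(-1)^{-m}=(-1)^{m}$ so the sign structure is preserved (hence $1/f$ is again semi-anti-periodic, not ``semi-anti-periodic with $-$'' — there is nothing to flip).

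The only mildly delicate point — and it is the point the authors flag by leaving the proof to the reader — is bookkeeping with the index set ${\mathbb S}$ and the direction of the shift: \eqref{dar} quantifies over $m\in {\mathbb S}$, which is ${\mathbb N}$ when $I=[0,\infty)$ and ${\mathbb Z}$ when $I={\mathbb R}$, and one must make sure the estimate for $1/f$ is read off at the same $m$ and the same base point $x$ (with $x+mp\in I$ automatically, since it is so for $f$). No genuine obstacle arises: both sides of every inequality involve only the already-established bound on $\|f(x+mp)-e^{ikmp}f(x)\|$ and the uniform lower bound $m>0$, and continuity plus boundedness of $1/f$ on $I$ is immediate from $\inf|f|>0$ and continuity of $f$. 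I would therefore present part (i) in two lines and part (ii) via the single displayed identity above, remarking that the Bloch and semi-anti-periodic variants are obtained by the obvious substitutions.
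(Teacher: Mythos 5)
The paper gives no proof of this proposition (it is explicitly ``left to the interested reader''), and your direct verification is correct and is evidently the intended argument: scalar multiplication with the $\varepsilon/(1+|c|)$ adjustment for (i), and for (ii) the quotient identity whose numerator is controlled by \eqref{dar} and whose denominator is bounded below by $m^{2}$ via $\inf_{x}|f(x)|=m>0$, together with the observation that $(-1)^{-m}=(-1)^{m}$ so the anti-periodic sign structure survives inversion. The only cosmetic blemish is the clash between $m$ as the integer index in ${\mathbb S}$ and $m$ as the infimum (a clash already present in the paper's statement), which you should resolve by renaming one of them.
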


We close this section by introducing the following definition.

\begin{defn}\label{sqaz}
Let $f\in C_{b}([0,\infty) : E)$ and $k\in {\mathbb R}.$ Then we say that $f(\cdot)$ is asymptotically semi-periodic (semi Bloch $k$-periodic, semi-anti-periodic) iff there exist a function $\phi \in  C_{0}([0,\infty): X)$
and a semi-periodic (semi Bloch $k$-periodic, semi-anti-periodic) function $g : {\mathbb R} \rightarrow E$ such that
$f (t)= g(t)+\phi (t)$ for all $t\geq 0.$
\end{defn}

\section{Stepanov classes of semi Bloch $k$-periodic functions and semi-anti-periodic functions}\label{ret}

As mentioned in the introduction, the notion of Stepanov semi-periodicity has not been analyzed in \cite{andres1}. We will use the following definitions:

\begin{defn}\label{list}
Let $k\in {\mathbb R}$ and $1\leq q<\infty.$ Then we say that
a function $f\in L_{S}^{q}(I:E)$ is  
Stepanov $q$-semi-periodic (Stepanov $q$-semi-Bloch $k$-periodic, Stepanov $q$-semi-anti-periodic), iff the function
$
\hat{f} : I \rightarrow L^{q}([0,1] :E),
$ defined by
\eqref{bruka}, 
is semi-periodic (semi-Bloch $k$-periodic, semi-anti-periodic).
\end{defn}

\begin{defn}\label{list1}
Let $k\in {\mathbb R}$ and $1\leq q<\infty.$ Then we say that
a function $f\in L_{S}^{q}(I:E)$ is  
asymptotically Stepanov $q$-semi-periodic (asymptotically Stepanov $q$-semi-Bloch $k$-periodic, asymptotically Stepanov $q$-semi-anti-periodic), iff the function
$
\hat{f} : I \rightarrow L^{q}([0,1] :E),
$ defined by
\eqref{bruka}, 
is asymptotically semi-periodic (asymptotically semi-Bloch $k$-periodic, asymptotically semi-anti-periodic).
\end{defn}

If a function $f\in L_{S}^{q}(I:E)$ belongs to some of spaces introduced in Definition \ref{list} (Definition \ref{list1}), then $f(\cdot)$ is Stepanov $q$-almost periodic (asymptotically Stepanov $q$-almost periodic). It should be noticed that any of the spaces introduced so far is translation invariant in the sense that if a function $f : I \rightarrow E$ belongs to this space, then for each $\tau \in I$ the function $f(\cdot +\tau)$ likewise belongs to this space.

Let $p>0$ and $k\in {\mathbb R}.$
It should be noted that, if $f : I \rightarrow E$ is Bloch $(p,k)$-periodic (anti-periodic, periodic), then $\hat{f} : I \rightarrow L^{q}([0,1] : E)$ is likewise Bloch $(p,k)$-periodic (anti-periodic, periodic). Further on, it immediately follows from the corresponding definitions that,
if $f : I \rightarrow E$ is semi-Bloch $k$-periodic (semi-anti-periodic, semi-periodic), then $f(\cdot)$ is Stepanov $q$-semi-Bloch $k$-periodic (Stepanov $q$-semi-anti-periodic, Stepanov $q$-anti-periodic) for every number $q\in [1,\infty);$
a large class of non-continuous periodic or Bloch $(p,k)$-periodic functions can be used to provide that the converse statement does not hold in general.
If $1\leq q<q'<\infty$ and $f : I \rightarrow E$
is (asymptotically) Stepanov $q'$-semi-periodic ((asymptotically) Stepanov $q'$-semi-Bloch $k$-periodic, (asymptotically) Stepanov $q'$-semi-anti-periodic), then $f(\cdot)$ is
(asymptotically) Stepanov $q$-semi-periodic ((asymptotically) Stepanov $q$-semi-Bloch $k$-periodic, (asymptotically) Stepanov $q$-semi-anti-periodic). To see that the converse statement does not hold in general, we will provide only one illustrative example:

\begin{example}\label{kule}
Suppose that $1<q<\infty.$
Let us recall that H. Bohr and E. F$\o$lner have constructed an example of a Stepanov $1$-almost periodic function $F: {\mathbb R} \rightarrow {\mathbb R}$ that is not Stepanov $q$-almost periodic (see \cite[p. 70]{bohr-folner}). Moreover, for each $n\in {\mathbb N}$ there exists a bounded periodic function $F_{n} : {\mathbb R} \rightarrow {\mathbb R}$ with at most countable points of discontinuity such that
\begin{align}\label{razm}
\lim_{n\rightarrow \infty}\sup_{t\in {\mathbb R}}\int^{t+1}_{t}\bigl|F_{n}(s)-F(s) \bigr|\, ds=0.
\end{align}
Therefore, $\hat{F_{n}} : {\mathbb R} \rightarrow L^{1}([0,1] : E)$ is a bounded periodic function and, in addition to the above, $\hat{F_{n}}(\cdot)$ is continuous ($n\in {\mathbb N}$). Due to \eqref{razm}, we have that $\lim_{n\rightarrow \infty}\hat{F_{n}}(t)=\hat{F}(t)$ uniformly in $t\in {\mathbb R}.$ This implies that the function $F(\cdot)$ is Stepanov $1$-semi-periodic but not Stepanov $q$-semi-periodic because it is not Stepanov $q$-almost periodic.
\end{example}

\begin{example}\label{pepa-stepa}
Assume that $\alpha, \ \beta \in {\mathbb R}$ and $\alpha \beta^{-1}$ is a well-defined irrational number. Then the functions
$$
f(t)=\sin\Biggl(\frac{1}{2+\cos \alpha t + \cos \beta t}\Biggr),\quad t\in {\mathbb R}
$$
and
$$
g(t)=\cos\Biggl(\frac{1}{2+\cos \alpha t + \cos \beta t}\Biggr),\quad t\in {\mathbb R}
$$
are Stepanov $q$-almost periodic but not almost periodic ($1\leq q<\infty$); see, e.g., \cite{kostic1}. We would like to raise the question whether the functions $f(\cdot)$ and $g(\cdot)$ are Stepanov $q$-semi-periodic for any $1\leq q<\infty ?$
\end{example}

\begin{example}\label{pepa-stepa-levitan}
Define sign$(0):=0.$ Then, for every almost periodic function $f: {\mathbb R} \rightarrow {\mathbb R},$ we have that the function sign$(f(\cdot))$ is Stepanov $q$-almost periodic for any finite real number $q\geq 1$ (see \cite[Example 2.2.2-Example 2.2.3]{kostic1}). Let us consider again the function $f(x):=\sin x +\sin \pi x \sqrt{2},$ $x\in {\mathbb R}.$ Then a simple analysis involving the identity
$f(x)=2\sin x\frac{1+\pi \sqrt{2}}{2}\cos x \frac{\pi \sqrt{2}-1}{2},$ $x\in {\mathbb R}$ shows that the function sign$(f(\cdot))$ is identically equal to a function $F(\cdot)$ of the following, much more general form: Let $(a_{n})_{n\in {\mathbb Z}}$ be a strictly increasing sequence of real numbers satisfying $\lim_{n\rightarrow +\infty}(a_{n+1}-a_{n})=+\infty,$ $\lim_{n \rightarrow +\infty}a_{n}=+\infty$ and 
$\lim_{n \rightarrow -\infty}a_{n}=-\infty.$ Suppose that $(b_{n})_{n\in {\mathbb Z}}$ is a sequence of non-zero real numbers satisfying that the sets $\{n\in {\mathbb Z} : b_{n}<0\}$ and $\{n\in {\mathbb Z} : b_{n}>0\}$ are infinite, as well as that there exists a finite positive constant $c>0$ such that $c\leq |b_{n}-b_{l}|$ for any $n,\ l\in {\mathbb Z}$ such that
$b_{n}b_{l}<0.$
Define the function $F : {\mathbb R} \rightarrow {\mathbb R}$ by $F(x):=b_{n}$ if $x\in [a_{n},a_{n+1}),$ for any $n\in {\mathbb Z}.$
Then $F(\cdot)$ cannot be Stepanov $q$-semi-periodic for any finite real number $q\geq 1.$ Otherwise, for $\epsilon \in (0,c^{q})$ we would be able to find a number $p>0$ such that for each $m\in {\mathbb Z}$ and $x\in {\mathbb R}$ one has:
$$
\int^{1}_{0}\Bigl| F(x+mp+s)-F(x+s)\Bigr|^{q}\, ds <(1/2)^{q}.
$$
Let $n\in {\mathbb Z}$ be such that $[x,x+1]\subseteq [a_{n},a_{n+1})$ and $b_{n}<0,$ say. Without loss of generality, we may assume that the set $\{n\in {\mathbb N} : b_{n}>0\} $ is infinite. Then the contradiction is obvious because, for every sufficiently large numbers $l\in {\mathbb N}$ with $b_{l}>0,$ we can find $m\in {\mathbb N}$
such that $[x+mp,x+mp+1]\subseteq [a_{l},a_{l+1})$ so that
$$
\int^{1}_{0}\Bigl| F(x+mp+s)-F(x+s)\Bigr|^{q}\, ds \geq \bigl| b_{n}-b_{l}\bigr|^{q}\geq c^{q}.
$$
\end{example}

\section{Invariance of semi-Bloch $k$-periodicity and semi-anti-periodicity under the actions of convolution products}\label{jazz}

In this section, we will investigate the invariance of semi-Bloch $k$-periodicity and semi-anti-periodicity under the actions of infinite convolution products and finite convolution products. 

For a given function $g: {\mathbb R} \rightarrow E,$ which is Stepanov $q$-semi-Bloch $k$-periodic or Stepanov $q$-semi-anti-periodic for some number $q\in [1,\infty),$ we will first investigate the qualitative properties of
function $G: {\mathbb R} \rightarrow X,$ given by
\begin{align}\label{wer}
G(t):=\int^{t}_{-\infty}R(t-s)g(s)\, ds,\quad t\in {\mathbb R},
\end{align}
where $(R(t))_{t> 0}\subseteq L(E,X)$ is a strongly continuous operator family satisfying some growth assumptions.
Concerning this question, we have the following result:

\begin{prop}\label{tresbanditos}
Suppose that $k\in {\mathbb R},$ $1\leq q <\infty,$ $1/q' +1/q=1$
and $(R(t))_{t> 0}\subseteq L(E,X)$ is a strongly continuous operator family satisfying that\\ $M:=\sum_{k=0}^{\infty}\|R(\cdot)\|_{L^{q'}[k,k+1]}<\infty .$ If $g : {\mathbb R} \rightarrow E$ is Stepanov $q$-semi-Bloch $k$-periodic (Stepanov $q$-semi-anti-periodic), then the function $G(\cdot),$ given by
\eqref{wer},
is well-defined and semi-Bloch $k$-periodic (semi-anti-periodic).
\end{prop}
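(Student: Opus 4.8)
The plan is to reduce the claim to the known density characterizations in Proposition~\ref{okje}(iii) and Proposition~\ref{novine}, combined with the convolution invariance of the Bloch $(p,k)$-periodic and anti-periodic classes. First I would treat the Stepanov $q$-semi-Bloch $k$-periodic case. By Definition~\ref{list}, the hypothesis means that $\hat{g} : {\mathbb R} \rightarrow L^{q}([0,1] : E)$ is semi-Bloch $k$-periodic in the sense of Definition~\ref{saw}; equivalently, by Proposition~\ref{okje}(iii) applied in the Banach space $L^{q}([0,1]:E)$, there is a sequence $(h_{n})$ of $L^{q}([0,1]:E)$-valued functions, each lying in ${\mathcal B}_{k}({\mathbb R} : L^{q}([0,1]:E))$, with $\hat{g} = \lim_{n} h_{n}$ uniformly on ${\mathbb R}$. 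Unwinding the hat, each $h_n$ is $\hat{g}_n$ for some $g_n \in L^{q}_{loc}({\mathbb R}:E)$ which is Bloch $(p_n,k)$-periodic (as an ordinary function on ${\mathbb R}$), and $\|\hat{g}_n - \hat{g}\|_{\infty} = \sup_{t}\big(\int_0^1\|g_n(t+s)-g(t+s)\|^q\,ds\big)^{1/q} = \|g_n - g\|_{S^q} \to 0$.

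Next I would form, for each $n$, the function $G_{n}(t) := \int_{-\infty}^{t} R(t-s) g_{n}(s)\,ds$. The remarks preceding Proposition~\ref{novine} (together with the standard computation that a convolution $R \ast$ of a Bloch $(p,k)$-periodic function is again Bloch $(p,k)$-periodic, here in its finite-convolution / one-sided form) show $G_n$ is Bloch $(p_n,k)$-periodic; I should verify this directly via the substitution $s \mapsto s - p_n$ in the integral defining $G_n(t+p_n)$, using $g_n(s-p_n) = e^{-ikp_n}g_n(s)$, which pulls out $e^{ikp_n}$ and reproduces $G_n(t)$. The core estimate is then that $G_n \to G$ uniformly on ${\mathbb R}$: splitting the integral over the unit intervals $[t-k-1, t-k]$ and applying Hölder on each,
\begin{align*}
\|G_{n}(t) - G(t)\| &\leq \sum_{k=0}^{\infty} \int_{t-k-1}^{t-k} \|R(t-s)\|\,\|g_{n}(s) - g(s)\|\,ds \\
&\leq \sum_{k=0}^{\infty} \|R(\cdot)\|_{L^{q'}[k,k+1]} \cdot \|g_{n} - g\|_{S^{q}} = M\,\|g_{n} - g\|_{S^{q}},
\end{align*}
which tends to $0$. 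The same estimate with $g$ in place of $g_n - g$ shows $G$ is well-defined and bounded, hence $G \in C_b({\mathbb R}:X)$ (continuity following from dominated convergence and strong continuity of $R(\cdot)$, exactly as in the analogous Stepanov almost periodicity arguments). Then $(G_n)$ is a sequence in $\bigcup_{p>0}{\mathcal B}_{p,k}({\mathbb R}:X) = {\mathcal B}_{k}({\mathbb R}:X)$ converging uniformly to $G$, so Proposition~\ref{okje}(iii) gives that $G$ is semi-Bloch $k$-periodic.

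For the Stepanov $q$-semi-anti-periodic case the argument is identical, replacing Proposition~\ref{okje}(iii) by Proposition~\ref{novine}: the approximants $\hat{g}_n$ may be taken anti-periodic (as $L^q([0,1]:E)$-valued functions, hence the corresponding $g_n$ are anti-periodic on ${\mathbb R}$), their convolutions $G_n$ are anti-periodic by the same substitution $s \mapsto s - p_n$ now producing a sign $(-1)^1 = -1$, and the uniform estimate above is unchanged; Proposition~\ref{novine} then yields that $G$ is semi-anti-periodic. I expect the only genuinely delicate point to be the bookkeeping that lets one pass from semi-periodicity of the $L^q([0,1]:E)$-valued function $\hat{g}$ to an honest $S^q$-approximating sequence $(g_n)$ of Bloch $(p_n,k)$-periodic \emph{functions on ${\mathbb R}$} — i.e., checking that the approximants supplied by Proposition~\ref{okje}(iii)/Proposition~\ref{novine} in the Banach space $L^q([0,1]:E)$ are themselves of the form $\widehat{g_n}$ for genuine Bloch periodic (resp. anti-periodic) $g_n$; this is where one uses that $\widehat{(\cdot)}$ is an isometry onto its range and that a Bloch $(p,k)$-periodicity condition on $\hat{g}_n$ is equivalent, for $p$ a multiple of $1$ or after the usual rescaling, to the same condition on $g_n$. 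Everything else is the routine Hölder/Young-type estimate displayed above.
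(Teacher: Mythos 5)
Your core analytic estimate is exactly the right one and coincides with the paper's: splitting $\int_0^\infty$ over the unit intervals and applying H\"older on each gives the factor $M=\sum_{k}\|R(\cdot)\|_{L^{q'}[k,k+1]}$, and the same bound settles well-definedness and boundedness of $G$. The genuine gap is at the step you yourself flag as ``delicate'' and then wave away: you need a sequence of Bloch $(p_n,k)$-periodic (resp.\ anti-periodic) functions $g_n:{\mathbb R}\to E$ with $\|g_n-g\|_{S^q}\to 0$, and you propose to obtain it by applying Proposition \ref{okje}(iii) (resp.\ Proposition \ref{novine}) to $\hat g$ in the Banach space $L^q([0,1]:E)$. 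That proposition only supplies approximants $h_n\in{\mathcal B}_{k}({\mathbb R}:L^q([0,1]:E))$; nothing forces them to lie in the range of the hat map, i.e.\ to satisfy the overlap-consistency $h_n(t)(s)=h_n(t+\sigma)(s-\sigma)$ that characterizes functions of the form $\widehat{g_n}$. Your appeal to ``$\widehat{(\cdot)}$ is an isometry onto its range'' does not address this: the issue is not injectivity of the hat map but whether the approximants land in its range. The claim can be repaired (for instance by re-running the Andres--Pennequin construction of periodic approximants at the level of $g$ itself and controlling the wrap-around discrepancy on $[p_n-1,p_n)$ via the semi-periodicity estimate), but as written the reduction is asserted, not proved.

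The paper avoids the problem entirely by never producing approximants: semi-Bloch $k$-periodicity of $\hat g$ means, directly from Definition \ref{saw}, that for each $\epsilon>0$ there is $p>0$ with $\int_t^{t+1}\|g(s+mp)-e^{ikmp}g(s)\|^q\,ds\le\epsilon^q$ for all $t\in{\mathbb R}$ and $m\in{\mathbb Z}$, and then your H\"older/splitting estimate applied to $G(t+mp)-e^{ikmp}G(t)$ (rather than to $G_n-G$) yields $\|G(t+mp)-e^{ikmp}G(t)\|\le M\epsilon$ with the same $p$, which is exactly the defining condition for $G$. I recommend restructuring your argument this way: it uses only the estimate you already have and removes the unproved representation step.
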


\begin{proof}
It can be easily seen that, for every $ t\in {\mathbb R},$ we have $G(t)=\int^{\infty}_{0}R(s)g(t-s)\, ds.$ Due to \cite[Proposition 2.11]{Kos}, we have that $G(\cdot)$ is well-defined and almost periodic.
It remains to be proved that $G(\cdot)$ is semi-Bloch $k$-periodic (semi-anti-periodic). For the sake of brevity, we will consider only  semi-Bloch $k$-periodicity below because the proof for semi-almost-periodicity is analogous (see also \cite[Proposition 3.1]{kosticdaniel}).
Let a number $\epsilon>0$ be given in advance.
Then we can find a finite number $p>0$
such that, for every $m\in {\mathbb Z}$ and $t\in {\mathbb R},$ we have
$$
\int^{t+1}_{t} \bigl\| g(s+mp)-e^{ikmp}g(s) \bigr\|^{q}\, ds \leq \epsilon^{q}, \quad t\in {\mathbb R}.
$$ 
Applying H\"older inequality and this estimate, we get that
\begin{align*}
\bigl\| G(t&+mp) -e^{ikmp}G(t)\bigr\| 
\\ & \leq \int^{\infty}_{0}\|R(r) \|\cdot \bigl\| g(t+mp -r)-e^{ikmp}g(t-r) \bigr\| \, dr
\\ & =\sum _{k=0}^{\infty} \int^{k+1}_{k}\|R(r) \| \cdot \bigl\| g(t+mp -r)-e^{ikmp}g(t-r) \bigr\| \, dr
\\ & \leq \sum _{k=0}^{\infty} \|R(\cdot)\|_{L^{q'}[k,k+1]}\Biggl(\int^{k+1}_{k}\bigl\| g(t+mp -r)-e^{ikmp}g(t-r) \bigr\|^{q} \, dr\Biggr)^{1/q}
\\ & =\sum _{k=0}^{\infty} \|R(\cdot)\|_{L^{q'}[k,k+1]} \Biggl(\int_{t-k-1}^{t-k}\bigl\| g(mp+s)-e^{ikmp}g(s) \bigr\|^{q} \, ds\Biggr)^{1/q}
\\ & \leq \sum _{k=0}^{\infty} \|R(\cdot)\|_{L^{q'}[k,k+1]} \epsilon = M \epsilon,\quad t\in {\mathbb R},
\end{align*}
which clearly implies the required. 
\end{proof}

The above result can be simply applied in the study of existence and uniqueness of semi-Bloch $k$-periodic (semi-anti-periodic) solutions of fractional Cauchy 
inclusion
\begin{align*}
D_{t,+}^{\gamma}u(t)\in {\mathcal A}u(t)+f(t),\ t\in {\mathbb R},
\end{align*}
where $D_{t,+}^{\gamma}$ denotes the Riemann-Liouville fractional derivative of order $\gamma \in (0,1],$ 
$f : {\mathbb R} \rightarrow E$ satisfies certain properties, and ${\mathcal A}$ is a closed multivalued linear operator (see \cite{kostic1} and \cite{kosticdaniel} for the notion and more details). 

In the following proposition, which can be deduced with the help of Proposition \ref{tresbanditos} and the proof of \cite[Propostion 2.13]{Kos}, we analyze the invariance of asymptotical semi-Bloch $k$-periodicity and asymptotical semi-anti-periodicity under the actions of finite convolution products.

\begin{prop}\label{stewpa-wqer}
Suppose that $k\in {\mathbb R},$ $1\leq q <\infty,$ $1/q' +1/q=1$
and $(R(t))_{t> 0}\subseteq L(E,X)$ is a strongly continuous operator family satisfying that, for every 
$s\geq 0,$ we have that
$$
m_{s}:=\sum_{k=0}^{\infty}\|R(\cdot)\|_{L^{q'}[s+k,s+k+1]}<\infty .
$$ 
Suppose, further, that 
the Stepanov $q$-bounded functions $g: {\mathbb R} \rightarrow X$ and
$q: [0,\infty)\rightarrow X$ satisfy that $g(\cdot)$ is 
Stepanov $q$-semi-Bloch $k$-periodic (Stepanov $q$-semi-anti-periodic), $\hat{q} \in C_{0}([0,\infty) : L^{q}([0,1]: E))$
and
$f(t)=g(t)+q(t)$ for all $t\geq 0.$
Let there exist a finite number $M >0$ such that
the following holds:
\begin{itemize}
\item[(i)] $ \lim_{t\rightarrow +\infty}\int^{t+1}_{t}\bigl[\int_{M}^{s}\|R(r)\| \|q(s-r)\| \, dr\bigr]^{q}\, ds=0.$ 
\item[(ii)] $\lim_{t\rightarrow +\infty}\int^{t+1}_{t}m_{s}^{q}\, ds=0.$
\end{itemize}
Then the function $H(\cdot),$ given by
\begin{align*}
H(t):=\int^{t}_{0}R(t-s)f(s)\, ds,\quad t\geq 0,
\end{align*}
is well-defined, bounded and asymptotically $S^{q}$-semi-Bloch $k$-periodic (asymptotically $S^{q}$-semi-anti-periodic). 
\end{prop}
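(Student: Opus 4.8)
The idea is to split $H$ into a principal almost-periodic part plus a remainder vanishing at infinity, exactly mimicking the standard decomposition used for asymptotic almost periodicity in \cite{Kos}. Write, for $t\geq 0$,
\begin{align*}
H(t)=\int_{0}^{t}R(t-s)f(s)\, ds=\int_{0}^{t}R(t-s)g(s)\, ds+\int_{0}^{t}R(t-s)q(s)\, ds=:H_{1}(t)+H_{2}(t).
\end{align*}
For $H_{1}$, introduce the full-line convolution $G(t):=\int_{-\infty}^{t}R(t-s)g(s)\, ds=\int_{0}^{\infty}R(r)g(t-r)\, dr$, which by Proposition \ref{tresbanditos} is well-defined and semi-Bloch $k$-periodic (semi-anti-periodic); here one uses $m_{0}=M<\infty$, which holds by hypothesis (ii) since $\int_{t}^{t+1}m_{s}^{q}\, ds\to 0$ forces $m_{s}<\infty$ for a.e.\ $s$, and monotonicity in $s$ then gives finiteness for all $s\geq 0$, in particular $s=0$. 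Then $H_{1}(t)-G(t)=-\int_{t}^{\infty}R(r)g(t-r)\, dr=-\int_{-\infty}^{0}R(t-s)g(s)\, ds$, and the first task is to show this difference lies in $C_{0}([0,\infty):X)$: estimating as in Proposition \ref{tresbanditos} with the Stepanov bound $\|g\|_{S^{q}}$ in place of $\epsilon$ gives $\|H_{1}(t)-G(t)\|\leq \|g\|_{S^{q}}\sum_{k\geq \lfloor t\rfloor}\|R(\cdot)\|_{L^{q'}[k,k+1]}$ up to a shift, which is the tail of the convergent series $M$ and hence tends to $0$ as $t\to\infty$; more precisely one should use $m_{s}$ with $s$ near $t$ and invoke (ii). So $H_{1}=G|_{[0,\infty)}+(\text{a }C_{0}\text{ function})$, i.e.\ $H_{1}$ is asymptotically semi-Bloch $k$-periodic.

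For $H_{2}$, I would show directly that $H_{2}\in C_{0}([0,\infty):X)$. Split the inner integral at the fixed cut-off $M$ from hypothesis (i): $H_{2}(t)=\int_{0}^{\min(t,M)}R(r)q(t-r)\, dr+\int_{M}^{t}R(r)q(t-r)\, dr$ when $t>M$. The second piece has $\|\cdot\|\leq\int_{M}^{t}\|R(r)\|\,\|q(t-r)\|\, dr$, whose Stepanov-type average over $[t,t+1]$ is controlled by the quantity in (i) and hence vanishes as $t\to\infty$; since we ultimately want the \emph{uniform} (sup) norm to vanish, one passes from the $L^{q}$-average over unit intervals to the sup via the fact that the relevant integrand is itself an average of a bounded-in-Stepanov-norm quantity — this is precisely the mechanism in the proof of \cite[Proposition 2.13]{Kos}, which I would cite. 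The first piece, $\int_{0}^{\min(t,M)}R(r)q(t-r)\, dr$, is a convolution over the \emph{fixed} finite interval $[0,M]$ of the operator family against $q(t-\cdot)$; since $\hat q\in C_{0}([0,\infty):L^{q}([0,1]:E))$, the translates $q(t-\cdot)$ have Stepanov norm over $[0,M]$ tending to $0$ as $t\to\infty$, and H\"older against $\sum_{k=0}^{M-1}\|R(\cdot)\|_{L^{q'}[k,k+1]}\leq m_{0}<\infty$ gives the decay. Combining, $H_{2}\in C_{0}$.

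Putting the pieces together: $H=G|_{[0,\infty)}+\bigl[(H_{1}-G|_{[0,\infty)})+H_{2}\bigr]$ with $G$ semi-Bloch $k$-periodic on $\mathbb R$ and the bracketed term in $C_{0}([0,\infty):X)$; by Definition \ref{sqaz} this is exactly asymptotic semi-Bloch $k$-periodicity (and one still owes the routine check that $\widehat H$ itself, not just $H$, admits the corresponding decomposition in $L^{q}([0,1]:X)$ — this follows because $\widehat{G}$ is semi-Bloch $k$-periodic by Proposition \ref{danice}-type reasoning applied on $\mathbb R$ together with the Stepanov definitions, and $\widehat{(\cdot)}$ maps $C_{0}$ into $C_{0}$). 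The well-definedness and boundedness of $H$ are immediate from $M<\infty$ and the Stepanov boundedness of $f$ via the same H\"older estimate. The semi-anti-periodic case is identical, replacing $e^{ikmp}$ by $(-1)^{m}$ throughout and using the semi-anti-periodic conclusion of Proposition \ref{tresbanditos}.

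\textbf{Main obstacle.} The delicate point is the transition from Stepanov ($L^{q}$-average over unit intervals) control to genuine uniform decay at infinity for the remainder terms, and the bookkeeping needed to see that the ``$q$-part'' integral beyond the cut-off $M$ is negligible uniformly — this is where hypotheses (i) and (ii) are engineered to be used, and where I would lean most heavily on the argument structure of \cite[Proposition 2.13]{Kos} rather than reproving it. A secondary but purely bureaucratic nuisance is consistently distinguishing the scalar weight exponent $k$ from the summation index $k$ in $\|R(\cdot)\|_{L^{q'}[k,k+1]}$ (a clash already present in the statement), which one should silently resolve by renaming the summation index.
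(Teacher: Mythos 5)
Your decomposition $H=H_{1}+H_{2}$, with $H_{1}$ compared to the full-line convolution $G$ from Proposition \ref{tresbanditos} and $H_{2}$ split at the cut-off $M$, is exactly the route the paper intends: the paper gives no written proof of this proposition, only the instruction that it ``can be deduced with the help of Proposition \ref{tresbanditos} and the proof of \cite[Proposition 2.13]{Kos}'', and that is precisely what you have reconstructed. Two corrections, though, one of which dissolves what you call the main obstacle.

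First, the target is asymptotic \emph{Stepanov} $q$-semi-Bloch $k$-periodicity, i.e.\ by Definitions \ref{list1} and \ref{sqaz} one must write $\hat{H}=\hat{G}|_{[0,\infty)}+\Phi$ with $\Phi\in C_{0}([0,\infty):L^{q}([0,1]:X))$. So the remainder terms only need to satisfy $\int_{t}^{t+1}\|\cdot\|^{q}\,ds\to 0$; no upgrade from Stepanov averages to uniform (sup-norm) decay is required or available. Hypotheses (i) and (ii) deliver exactly this: $\int_{t}^{t+1}\|H_{1}(s)-G(s)\|^{q}\,ds\leq \|g\|_{S^{q}}^{q}\int_{t}^{t+1}m_{s}^{q}\,ds\to 0$ by (ii), and the far part of $H_{2}$ is handled verbatim by (i), while the near part $\int_{0}^{\min(s,M)}R(r)q(s-r)\,dr$ decays (even uniformly) via H\"older over the finitely many unit intervals in $[0,M]$ and $\hat{q}\in C_{0}$. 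The ``transition to genuine uniform decay'' you worry about is not part of the statement. Second, your derivation of $m_{0}<\infty$ from (ii) via ``monotonicity of $m_{s}$'' is not sound ($m_{s}$ sums over a shifted partition and is not monotone in $s$), but it is also unnecessary: the hypothesis explicitly assumes $m_{s}<\infty$ for every $s\geq 0$, in particular for $s=0$, which is what Proposition \ref{tresbanditos} needs. With these two adjustments your argument closes without any remaining gap.
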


The above result can be applied in the qualitative analysis of  asymptotically (Stepanov $q$-)semi-Bloch $k$-periodic solutions and asymptotically (Stepanov $q$-)semi-anti-periodic solutions of the following abstract Cauchy inclusion
\[
\hbox{(DFP)}_{f,\gamma} : \left\{
\begin{array}{l}
{\mathbf D}_{t}^{\gamma}u(t)\in {\mathcal A}u(t)+f(t),\ t\geq 0,\\
\quad u(0)=x_{0},
\end{array}
\right.
\]
where ${\mathbf D}_{t}^{\gamma}$ denotes the Caputo fractional derivative of order $\gamma \in (0,1],$ $x_{0}\in E,$
$f : [0,\infty) \rightarrow E$ satisfies certain properties, and ${\mathcal A}$ is a closed multivalued linear operator (see \cite{kostic1} and \cite{kosticdaniel} for the notion and more details). 

Finally, let $B$ be a subset of ${\mathbb{R}}^{s}$ and $f:{\mathbb{R}}\times B\rightarrow{\mathbb{R}}^s$. Then we say that the function $f(\cdot)$ is
uniformly semi-Bloch $k$-periodic function iff for any
compact subset $K$ of $B$, we have 
\begin{align*}
\forall\varepsilon>0\quad\exists p\geq0\quad\forall m\in{\mathbb{Z}}\quad
\forall x\in{\mathbb{R}}\quad \forall\alpha\in K\quad
\|f(x+mp,\alpha)-e^{ikmp}f(x,\alpha)\|_{{\mathbb{R}}^s}\leq\varepsilon.
\end{align*}
We close the paper with the observation that we can simply reformulate \cite[Proposition 3]{andres1} for uniformly semi-Bloch $k$-periodic functions (uniformly semi-anti-periodic functions)
and provide certain applications to matrix differential equations, as it has been done in \cite[Theorem 4]{andres1} for semi-periodic functions.

\end{document}